\documentclass{amsart}







\usepackage{xspace}

\usepackage{setspace}



\providecommand{\norm}[1]{\ensuremath{\left\lVert#1\right\rVert}\xspace}

\providecommand{\C}{\ensuremath{{\mathbb C}}\xspace}
\providecommand{\R}{\ensuremath{{\mathbb R}}\xspace}

\providecommand{\Z}{\ensuremath{{\mathbb Z}}\xspace}
\providecommand{\Q}{\ensuremath{{\mathbb Q}}\xspace}

\providecommand{\eqsp}{\quad}



\DeclareMathOperator{\im}{Im}

\theoremstyle{plain}
\newtheorem{theorem}{Theorem}[section]

\newtheorem{lemma}[theorem]{Lemma}
\newtheorem{cor}[theorem]{Corollary}

\theoremstyle{definition}

\newtheorem{remark}[theorem]{Remark}

\numberwithin{equation}{section}

\author{Yu. Lyubich}
\author{A. Tsukerman}
\title[Idempotents]{Idempotents in nonassociative  
algebras and eigenvectors of quadratic operators}

\address{Corresponding Author: Yu. Lyubich, Department of Mathematics, Technion, Haifa 32000, Israel}
\email{lyubich@tx.technion.ac.il}

\address{A. Tsukerman, 30 Port Royal, Foster City, CA 94404}

\date{}

\begin{document}


\bibliographystyle{amsplain}

\footnotetext[1]{{\em 2000 Mathematics Subject Classification:17A01, 12F05}
\newline\indent {\em Keywords: nonassociative algebras, idempotents, nilpotents, quadratic
  operators, eigenvectors}}

\begin{abstract} 
Let $F$ be a field, char$(F)\neq 2$. Then every finite-dimensional $F$-algebra has 
either an idempotent or an absolute nilpotent if and 
only if over $F$ every polynomial of odd degree has a root in $F$. This is also 
necessary and sufficient for existence of eigenvectors for all
quadratic operators in finite-dimensional spaces over $F$. 
\end{abstract}

\maketitle

\section{Introduction }
\label{sec:intro}

Let $F$ be a field, and let $A$ be an $F$-{\em algebra}, i.e. a vector space 
over $F$ endowed by a multiplication that is a bilinear 
mapping $(x,y)\mapsto xy$ from the Cartesian square  $A\times A$ into $A$. 
The associativity, the commutativity and the unitality are not assumed 
in this general definition. 

Let $\dim A =n$, $ 1\leq n<\infty$, and let 
$(e_i)_{i=1}^n$ be a basis of $A$. Then the coordinates 
of the product $z=xy$ are 
\begin{equation}
\label{eq:1}
\zeta_j = \sum_{i,k=1}^n \alpha_{ik,j} \xi_i\eta_k, \eqsp 1\leq j\leq n, 
\end {equation}     
where $\xi_i,\eta_k$ are the coordinates of $x$ and $y$, respectively,
and $\alpha_{ik,j}\in F$ are the uniquely determined coefficients. 
Every cubic matrix $[\alpha_{ik,j}]$ can be realized in this 
context but the algebra is commutative if and only if the square 
matrices $[\alpha_{ik,j}]_{i,k = 1}^n$, $1\leq j\leq n$, are symmetric. 

Any algebra $A$ determines a {\em quadratic operator} $Vx=x^2$ in the
underlying vector space . This mapping $A\rightarrow A$ is homogeneous of degree 2, i.e. 
$V(\alpha x) = \alpha^2Vx$ for all $x\in A$ and all $\alpha\in F$. 
With $y=x$ in \eqref{eq:1} we have 
\begin{equation}
\label{eq:2}
\zeta_j = \sum_{i,k=1}^n \alpha_{ik,j} \xi_i\xi_k, \eqsp 1\leq j\leq n, 
\end {equation}    
where $z_j$ are the coordinates of $Vx$. If char$(F)\neq 2$ then the
coefficients $\alpha_{ik,j}$ in \eqref{eq:2} can be symmetrized, thus 
in this case all quadratic operators come from the commutative
algebras.    

Similarly to the linear case, a vector $x\neq 0$ is called an {\em eigenvector} of $V$ 
if there exists an {\em eigenvalue} $\lambda\in F$ such that $Vx = \lambda x$, i.e.  
\begin{equation}
\label{eq:3}
\sum_{i,k=1}^n \alpha_{ik,j} \xi_i\xi_k = \lambda\xi_j, \eqsp 1\leq j\leq n.
\end {equation} 
The eigenvectors of $V$ are just the basis vectors of 
the one-dimensional subalgebras of the algebra $A$. 

The set $\sigma(V)$ of eigenvalues of $V$ can be called its 
{\em spectrum}. However, only two following cases are substantial. 

1) $\lambda =1$. In this case every eigenvector
$x$ is a fixed point of $V$ , so $x^2=x$, i.e. $x$ is an
{\em idempotent} in the generating algebra $A$. 

2) $\lambda =0$. In this case every eigenvector
$x$ is a root of $V$, so $x^2=0$, i.e. $x$ is an {\em absolute nilpotent}. 

Now consider any $\lambda\in\sigma(V)$, and let  $x$ be a corrsponding
eigenvector. Then there exists $z\in$ Span$(x)\setminus\{0\}$
which is either an idempotent or an absolute nilpotent in $A$.
Hence, the intersection  $\sigma_p(V)=\sigma(V)\cap{\{0,1\}}$
is not empty if and only if $\sigma(V)\neq\emptyset$.  
Moreover, if $\sigma_p(V)= \{0,1\}$ then $\sigma(V)=F$; if $\sigma_p(V)= \{1\}$ then 
$\sigma(V)=F\setminus\{0\}$ ; if $\sigma_p(V)= \{0\}$ then $\sigma(V)=\{0\}$.    

We say that a field $F$ is of {\em class} $\Lambda$ if every finite-dimensional 
$F$-algebra contains either an idempotent or an absolute
nilpotent. Equivalently, $F\in\Lambda$ means that in 
every finite-dimensional vector space over $F$ every quadratic
operator has an eigenvector, i.e. its spectrum is not empty. 

In [Ly] it was proven that $\R\in\Lambda$. The proof is topological and very short. 
Let us reproduce it here.  

Let $\norm{.}$ be an Euclidean norm in $\R^n$, and let $S=\{x:\norm{x}=1\}$. 
One can assume that $Vx\neq 0$ for all $x\neq 0$. Then $\hat{V}x = Vx/\norm{Vx}$ 
is a smooth mapping $S\rightarrow S$. Its topological degree is even since  
$\hat{V}(-x)=\hat{V}x$. Therefore, $\hat{V}$ is not homotopic to the identity mapping 
$S\rightarrow S$. Hence, there are $x\in S$ and  $\tau\in(0,1)$ such that 
$(1-\tau)\hat{V}x+\tau x = 0$. Thus, $x$ is an eigenvector of $V$. 

Note that if {\em a field $F$ is of class $\Lambda$ 
then every its finite (i.e., finite-dimensional) extension $\Phi$ is also of 
class $\Lambda$}. Indeed, let $A$ be an $l$-dimensional
$\Phi$-algebra. Then it is an $m$-dimensional $F$-algebra  
with $m=l[\Phi:F]\equiv l\dim_F\Phi$. Every idempotent or
absolute nilpotent in $F$-setting remains as is when $F$ extends to $\Phi$.

In particular, {\em $\C\in\Lambda$ since $\R\in\Lambda$}. 

In the present paper we establish a necessary and a sufficient
condition for $F\in\Lambda$. These conditions coincide if 
char$(F)\neq 2$.

\section{Results}
\label{sec:res}

Denote by $\Gamma$ the class of fields 
$F$ such that every polynomial of odd degree with coefficients from $F$ has a root in $F$.
For $F=\R$ this property is the only topological ingredient of the Gauss proof (1815) 
of the ``Fundamental Theorem of Algebra''. 
\begin{theorem}
\label{thm:acf}
Every algebraically closed field $F$ is of class $\Lambda$. 
\end{theorem}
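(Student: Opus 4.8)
The plan is to recast the existence of an eigenvector of $V$ as the existence of a \emph{nontrivial} solution of a homogeneous polynomial system, and then to exploit algebraic closure through a B\'ezout-type count. Recall from the Introduction that membership $F\in\Lambda$ is equivalent to every quadratic operator $V$ having an eigenvector, and that an eigenvector with eigenvalue $\lambda\neq 0$ scales to an idempotent while one with $\lambda=0$ is an absolute nilpotent. So it suffices to produce $x\neq 0$ and $\lambda\in F$ solving \eqref{eq:3}.

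First I would homogenize \eqref{eq:3} by promoting $\lambda$ to an \emph{extra} coordinate. Consider the $n$ forms
\[
Q_j(\xi_1,\dots,\xi_n,\lambda)=\sum_{i,k=1}^n\alpha_{ik,j}\xi_i\xi_k-\lambda\xi_j,\qquad 1\le j\le n.
\]
Giving weight $1$ to each $\xi_i$ and to $\lambda$, every $Q_j$ is homogeneous of degree $2$ in the $n+1$ variables $(\xi_1,\dots,\xi_n,\lambda)$. Hence their common zeros form a projective variety $Z\subseteq\mathbb{P}^n$, and a point of $Z$ with $(\xi_1,\dots,\xi_n)\neq 0$ is precisely an eigenvector of $V$. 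Observe that this reduction uses no hypothesis on $\mathrm{char}(F)$.

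The obstacle is that $Z$ is nonempty for a trivial reason: the point $p_0=[0:\dots:0:1]$ always lies on $Z$ and corresponds to the useless ``solution'' $x=0$. The real task is therefore to produce a point of $Z$ \emph{other} than $p_0$, and this is where algebraic closure enters through a counting argument. If $\dim Z\ge 1$ then $Z$ is infinite and automatically contains points $\neq p_0$. If $\dim Z=0$, then $Z$ is the intersection of $n$ hypersurfaces in $\mathbb{P}^n$ in the expected codimension $n$, hence a complete intersection, and B\'ezout's theorem gives $\sum_{p\in Z}m_p=2^n$ counted with multiplicity. I would then compute the local multiplicity at $p_0$: in the affine chart $\lambda=1$ the equations read $\sum_{i,k}\alpha_{ik,j}\xi_i\xi_k=\xi_j$ (that is, $x^2=x$), whose Jacobian at $\xi=0$ is $-\mathrm{Id}$, nonsingular. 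Thus $p_0$ is a reduced point, $m_{p_0}=1$, and $Z$ carries at least $2^n-1\ge 1$ further points (with multiplicity), each of which must have $(\xi_1,\dots,\xi_n)\neq 0$, since $p_0$ is the only point of $\mathbb{P}^n$ with all $\xi_i=0$. Any such point yields the eigenvector we need, and the theorem follows.

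The main difficulty is exactly this step of excluding $p_0$ as the sole solution; everything hinges on the total intersection degree $2^n$ strictly exceeding the local multiplicity $1$ at $p_0$. An alternative route that avoids the chart computation is to split into cases: either $V$ has a nonzero root, which is an absolute nilpotent and we are done, or $x^2\neq 0$ for all $x\neq 0$, in which case $V$ descends to a morphism $\phi:\mathbb{P}^{n-1}\to\mathbb{P}^{n-1}$ of degree $2$; computing in the Chow ring of $\mathbb{P}^{n-1}\times\mathbb{P}^{n-1}$, the graph of $\phi$ meets the diagonal with intersection number $\sum_{i=0}^{n-1}2^i=2^n-1\neq 0$, forcing a fixed point and hence an idempotent. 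Either way, the positivity of a B\'ezout-type number is the crux.
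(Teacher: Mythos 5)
Your proposal is correct and is essentially the paper's own argument: homogenize \eqref{eq:3} into the system \eqref{eq:4.1}, use B\'ezout's theorem to count $2^n$ solutions in the zero-dimensional case (the positive-dimensional case being immediate over an algebraically closed field), and show the trivial solution $(\mathbf{0}:1)$ has multiplicity one, so a nontrivial solution survives. Your Jacobian computation in the chart $\lambda=1$ is just a reformulation of the paper's Lemma \ref{lem:1}, which proves the same multiplicity-one statement via Nakayama's lemma in the local ring at the origin.
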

Our Main Theorem is 
\begin{theorem}
\label{thm:1}
$\Gamma'\subset\Lambda\subset\Gamma$ where $\Gamma ' = \Gamma\cap\{F: {\rm char}(F)\neq 2 \}$. 
\end{theorem}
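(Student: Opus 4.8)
The plan is to prove the two inclusions separately, after two reductions and one reformulation. Since ${\rm char}(F)\neq 2$ throughout the $\Gamma'$ part, I may symmetrize the structure constants and assume every algebra in sight is commutative, and I replace ``idempotent or absolute nilpotent'' by the equivalent ``one-dimensional subalgebra'', i.e. a point $[x]\in\mathbb{P}^{n-1}$ with $x^2\in\Span(x)$. The reformulation I would record first is purely field-theoretic: $F\in\Gamma$ if and only if $F$ has no proper extension of odd degree. Indeed, if $K/F$ has odd degree $>1$, any $\alpha\in K\setminus F$ has $[F(\alpha):F]$ an odd divisor $>1$ of $[K:F]$, so its (irreducible) minimal polynomial is an odd-degree polynomial with no root in $F$; the converse is immediate, since an odd-degree polynomial has an irreducible factor of odd degree. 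This equivalence holds in every characteristic.

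For $\Gamma'\subset\Lambda$: let $A$ be commutative with $\dim A=n$ and consider the $F$-subscheme $E\subset\mathbb{P}^{n-1}$ cut out by the $2\times2$ minors of the $2\times n$ matrix $[\,x\mid x^2\,]$, whose $\bar F$-points are exactly the eigendirections. By Theorem~\ref{thm:acf}, $\bar F\in\Lambda$, so $E(\bar F)\neq\emptyset$. When $E$ is zero-dimensional its length (the number of eigendirections counted with multiplicity) equals the intersection number of these cubic minors, which is the fixed-point number $2^n-1$ of a degree-two self-map of $\mathbb{P}^{n-1}$, an odd integer independent of $A$. Now $E$ is defined over $F$ (the $\alpha_{ik,j}$ lie in $F$), so the Galois group permutes its points and the length decomposes as $\sum_P (\deg P)(\operatorname{mult}_P)$ over closed points $P$. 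By the reformulation, $F\in\Gamma$ forbids residue fields of odd degree $>1$, so every closed point has degree $1$ or even degree; reducing modulo $2$, only the rational points survive, and their multiplicities sum to the odd number $2^n-1$. Hence a rational point exists: an $F$-rational eigenvector, so $F\in\Lambda$.

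The main obstacle is entirely geometric, namely guaranteeing that this parity count is legitimate, and two degeneracies must be dispatched. First, the minors may vanish on a positive-dimensional subvariety; I would cut $E$ down by intersecting with a generic $F$-rational linear subspace, preserving both $F$-rationality and the odd intersection number, so that a rational point of the slice is still a genuine eigendirection. Second, the self-map $[x]\mapsto[x^2]$ has base points exactly where $x^2=0$, i.e. at nonzero absolute nilpotents of $A\otimes\bar F$; a rational such nilpotent already solves the problem, and otherwise I would either excise the base locus without changing the parity or run an induction on $\dim A$. Making this bookkeeping of lengths, multiplicities, and base points rigorous is where the real work lies; everything else is formal.

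For $\Lambda\subset\Gamma$ I argue the contrapositive and build a witness. If $F\notin\Gamma$ then, by the reformulation, $F$ carries a proper odd-degree extension $K$. Because $2^n-1$ is odd, the Galois action on the eigendirections of any $n$-dimensional algebra always has an orbit of odd size, and a rational eigenvector is precisely a singleton orbit; the aim is therefore to realize a commutative $F$-algebra structure on $K$ whose eigenscheme is Galois-stable with all its odd-sized orbits of size $>1$, which the presence of an odd extension makes possible. Concretely I would deform the field multiplication of $K$, for instance replacing $xy$ by $Sx\cdot Ty$ for suitable $F$-linear maps $S,T$, so as to destroy the one rational eigendirection $F\cdot 1$ coming from the unit while keeping the remaining, necessarily non-rational, directions conjugate over $\bar F$. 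Verifying that the resulting quadratic operator has no $F$-eigenvector at all, the exact mirror of the parity count above, is the delicate point of this direction.
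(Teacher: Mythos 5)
Your generic-case parity argument for $\Gamma'\subset\Lambda$ is essentially sound (and in one respect more direct than the paper: you deduce evenness of non-rational contributions straight from the degrees of closed points via the equivalence of Lemma \ref{lem:0}, where the paper routes through $\Gamma'\subset\Delta$ and Galois orbit sizes in a $2$-group). But the two places you yourself flag as ``where the real work lies'' are exactly where the proof lives, and your proposed fixes do not work. When the eigenscheme $E$ is positive-dimensional, slicing by a generic $F$-rational linear subspace does \emph{not} preserve the intersection number: under excess intersection Bezout gives no constraint on the degrees of the components, so nothing forces the sliced length to be odd --- $E$ could, for instance, contain an $F$-irreducible even-degree component with no rational point, and its generic linear slice then has even length, killing the mod-$2$ count. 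The paper's resolution is different and is the technical heart of the argument: it perturbs the system \eqref{eq:4.1} into \eqref{eq:4.2} by terms $\varepsilon_j(\varphi_j(x))^2$ with $\varepsilon_j$ \emph{infinitesimal}, i.e.\ lying in the maximal ideal of the valuation ring of a non-Archimedean valued extension $K\supset F$ (Lemma \ref{lem:3} makes the perturbed system generic), constructs $K$ inside $\overline{F((t))}$ maximal with residue field $F$ by Zorn's Lemma, proves $K\in\Gamma'$ (Lemma \ref{lem:2}), applies the generic case over $K$, and then \emph{specializes}: after normalizing the solution so that $\norm{x}=1$ and $v(\lambda)\leq 1$, the splitting $A_K=F\oplus A_K^0$ (Lemma \ref{lem:'2}) lets one reduce the equations modulo the maximal ideal and obtain a genuine nontrivial $F$-solution of the unperturbed system. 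Neither ``excise the base locus without changing the parity'' nor ``induct on $\dim A$'' supplies a substitute for this specialization machinery.

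The $\Lambda\subset\Gamma$ direction has a second genuine gap. Your deformation $xy\mapsto Sx\cdot Ty$ is, with $S=T=\mathrm{id}-\pi$ ($\pi$ the projection of the odd-degree extension $\Phi$ onto $F$), exactly the paper's starting multiplication $x\circ y=(x-\pi x)(y-\pi y)$ --- but this alone cannot finish: every element of $F$ is then an absolute nilpotent (indeed any $S$ with nonzero kernel produces absolute nilpotents), so the deformed algebra on $\Phi$ itself always has eigenvectors, while invertible $S,T$ give no visible way to exclude rational eigenvectors. The missing idea is the paper's passage to the quotient $A=\Phi/F$ (the subspace $F$ is an ideal for $\circ$), together with the field-theoretic verification that $A$ has no eigenvector at all: a nonzero solution of $X\circ X=\lambda X$ lifts to $x\in\Phi\setminus F$ satisfying $(x-\pi x)^2-\lambda x-\mu=0$, a quadratic over $F$ whose second root $x'=2\pi x+\lambda-x$ is again outside $F$, so $F[x]$ is a degree-$2$ intermediate field and $2\mid[\Phi:F]$, contradicting oddness. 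Your Galois-orbit heuristics (``keep the non-rational directions conjugate'') do not substitute for this concrete construction and degree argument, and you correctly identify but do not close that gap.
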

\begin{cor}
\label{cor:2}
If {\rm char}$(F)\neq 2$ then $F\in\Lambda$ if and only if $F\in\Gamma$.
\end{cor}
With the additional condition char$(F)\neq 2$ Theorem \ref{thm:acf} follows
from Corollary \ref{cor:2}. Also, the latter yields  
a ''real '' counterpart of Theorem \ref{thm:acf}.
\begin{cor}
\label{cor:3}
Every formally real and really closed field $F$ is of class $\Lambda$. 
\end{cor}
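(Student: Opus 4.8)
The plan is to deduce Corollary \ref{cor:3} directly from Corollary \ref{cor:2} by checking its two hypotheses for a real closed field $F$: that ${\rm char}(F)\neq 2$ and that $F\in\Gamma$. Neither check is deep, so the whole proof is essentially an assembly of standard facts about real closed fields.

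First I would record that $F$ has characteristic $0$. A formally real field cannot have positive characteristic $p$, for otherwise $-1 = 1+\cdots+1$ ($p-1$ summands) would be a sum of squares, contradicting formal reality. In particular ${\rm char}(F)\neq 2$, so Corollary \ref{cor:2} becomes applicable the moment we know $F\in\Gamma$.

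The heart of the matter is therefore to show $F\in\Gamma$, i.e. that every polynomial of odd degree over $F$ has a root in $F$. Here I would invoke the Artin--Schreier structure theorem: for a real closed field $F$, the extension $F(\sqrt{-1})$ is algebraically closed and $[F(\sqrt{-1}):F]=2$. Consequently, for any element $\alpha$ algebraic over $F$ we have $F\subseteq F(\alpha)\subseteq F(\sqrt{-1})$, so $[F(\alpha):F]$ divides $2$; thus the minimal polynomial of $\alpha$ — and hence every irreducible polynomial over $F$ — has degree $1$ or $2$. Now let $f$ be a polynomial of odd degree and factor it into irreducibles over $F$. Since $\deg f$ is odd while every irreducible factor has degree $1$ or $2$, at least one factor must have degree $1$, which furnishes a root of $f$ in $F$. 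Hence $F\in\Gamma$, and Corollary \ref{cor:2} yields $F\in\Lambda$, as claimed.

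The only nonelementary ingredient — and essentially the only possible obstacle — is having on hand a usable characterization of real closedness; everything else is bookkeeping. If one prefers to avoid citing the Artin--Schreier theorem, the same conclusion $F\in\Gamma$ follows from the order-theoretic description of real closed fields via the intermediate value property for polynomials: a monic odd-degree polynomial takes values of opposite sign at suitably large positive and negative arguments of the unique ordering of $F$, and hence has a root. Either route closes the argument immediately once the relevant standard characterization is invoked.
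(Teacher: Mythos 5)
Your proposal is correct and takes essentially the same approach as the paper: the paper's one-line proof likewise deduces the corollary from Corollary \ref{cor:2} by observing that ${\rm char}(F)=0$ and $F\in\Gamma$, so $F\in\Gamma'$, citing [La], Ch.~11 for these standard facts about formally real, really closed fields. You simply fill in the standard field-theoretic details (the Artin--Schreier characterization, or the sign-change argument for odd-degree polynomials) that the paper delegates to the reference.
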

Indeed, in this case $F\in\Gamma'$ since char$(F) = 0$ and $F\in\Gamma$, see e.g. [La], Ch.11. 

In several corollaries below we use the inclusion $\Lambda\subset\Gamma$. 
For instance, {\em the field $\Q$ of rational numbers is not of class $\Lambda$} 
since the polynomial $\alpha^3-2$ has no roots in $\Q$. More generally, we have 
\begin{cor}
\label{prop:3}
Every finite extension $F\supset\Q$ is not of class $\Lambda$. 
\end{cor}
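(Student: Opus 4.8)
The plan is to deploy the inclusion $\Lambda\subset\Gamma$ furnished by Theorem \ref{thm:1}: since this gives $F\notin\Lambda$ whenever $F\notin\Gamma$, it suffices to produce, over an arbitrary finite extension $F\supset\Q$, a polynomial of odd degree having no root in $F$. I will exhibit such a polynomial explicitly, so that no structure theory of number fields is needed.

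Set $n=[F:\Q]<\infty$ and fix an odd prime $p>n$; infinitely many such primes exist. I claim that $x^p-2$, which has coefficients in $\Q\subset F$ and odd degree $p$, has no root in $F$. The mechanism is a degree count over $\Q$. First, $x^p-2$ is irreducible over $\Q$ by Eisenstein's criterion at the prime $2$: the constant term $-2$ is divisible by $2$ but not by $4$, while the remaining lower coefficients vanish and the leading coefficient is $1$. Consequently $x^p-2$ is the minimal polynomial over $\Q$ of each of its roots, so every root $\alpha$ satisfies $[\Q(\alpha):\Q]=p$.

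Now suppose toward a contradiction that some root $\alpha$ of $x^p-2$ lies in $F$. Then $\Q(\alpha)$ is a subfield of $F$, and by multiplicativity of degrees $p=[\Q(\alpha):\Q]$ divides $[F:\Q]=n$, contradicting $p>n$. Hence $x^p-2$ has no root in $F$, so $F\notin\Gamma$, and therefore $F\notin\Lambda$. The argument has no genuinely hard step once the reduction to $\Gamma$ is made; the one subtlety worth flagging is the distinction between \emph{having a root in $F$} and \emph{being irreducible over $F$}. Over $F$ the polynomial $x^p-2$ may well factor, but irreducibility over the base field $\Q$ forces each of its roots to generate a degree-$p$ extension of $\Q$, and it is exactly this rigidity, combined with the choice $p>n$, that precludes any root from lying in $F$.
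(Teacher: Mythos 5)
Your proof is correct, and while it shares the paper's overall skeleton---reducing to $F\notin\Gamma$ via the inclusion $\Lambda\subset\Gamma$ of Theorem \ref{thm:1} and producing the witnessing odd-degree polynomial by Eisenstein---it takes a genuinely more direct route at the key step of ruling out a root in $F$. The paper first passes to a finite extension $\Phi\supset F$ that is \emph{normal} over $\Q$, argues that a single root of the irreducible degree-$m$ polynomial lying in $\Phi$ would drag all $m$ roots into $\Phi$ and force $m\leq[\Phi:\Q]$, and then transfers $\Phi\notin\Lambda$ back to $F\notin\Lambda$ using the fact from the introduction that class $\Lambda$ is inherited by finite extensions. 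You dispense with both the normal closure and that hereditary property: since $x^p-2$ is irreducible over $\Q$, any root $\alpha\in F$ satisfies $[\Q(\alpha):\Q]=p$, and multiplicativity of degrees in the tower $\Q\subset\Q(\alpha)\subset F$ forces $p$ to divide $n=[F:\Q]$, which your choice $p>n$ forbids. This shows the paper's normality detour is unnecessary for this corollary, and your version adapts verbatim to Corollary \ref{prop:4} by taking $x^m-p$ over $\Q_p$ (Eisenstein at the uniformizer $p$). The remaining differences are cosmetic: you take the degree to be an odd prime where the paper only needs it odd, and you argue directly rather than by assuming $F\in\Lambda$ for contradiction; both choices are sound.
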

\begin{proof}
Assume $F\in\Lambda$. Let $\Phi$ be a finite extension of $F$ normal over $\Q$. 
With $n = [\Phi:\Q]$ let $m$ be any odd number, $m>n$. 
By Eisenstein's test there exists an irreducible 
polynomial $f$ of degree $m$ over $\Q$. This $f$ has no roots in
$\Phi$, otherwise, all $m$ roots of $f$ belong to $\Phi$,  
so $m\leq n$. Since $f$ is a polynomial over $\Phi$, we see that 
$\Phi\notin\Gamma$. Hence, $\Phi\notin\Lambda$, a fortiori,
$F\notin\Lambda$, a contradiction. 
\end{proof}
The same argument yields  
\begin{cor}
\label{prop:4}
Let $p$ be a prime number, and let $\Q_p$ be the field of $p$-adic numbers. Every  
finite extension $F\supset\Q_p$ is not of class $\Lambda$. 
\end{cor}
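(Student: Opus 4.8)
The plan is to transcribe the proof of Corollary~\ref{prop:3} word for word, replacing the base field $\Q$ by $\Q_p$; the single ingredient that must be re-established in the $p$-adic setting is the existence of irreducible polynomials of arbitrarily large odd degree over $\Q_p$.

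First I would suppose, for contradiction, that $F\in\Lambda$. Because char$(\Q_p)=0$, the finite extension $F\supset\Q_p$ is separable and therefore possesses a finite normal closure $\Phi$ over $\Q_p$; put $n=[\Phi:\Q_p]$. Since $\Phi$ is itself a finite extension of $F$, the hereditary property of $\Lambda$ under finite extensions proved in the Introduction gives $\Phi\in\Lambda$, hence $\Phi\in\Gamma$ by the inclusion $\Lambda\subset\Gamma$ of Theorem~\ref{thm:1}. I will contradict this by exhibiting an odd-degree polynomial over $\Phi$ with no root in $\Phi$.

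The crux is to replace Eisenstein's test over $\Q$ by its valuation-theoretic form over $\Q_p$. Here $\Z_p$ is a discrete valuation ring with uniformizer $p$, and for any odd $m>n$ the polynomial $x^m-p$ has leading coefficient a unit, every lower coefficient divisible by $p$, and constant term $-p$ not divisible by $p^2$; Eisenstein's criterion over $\Z_p$ then makes it irreducible over $\Q_p$. This is the only step where the $p$-adic (rather than merely field-theoretic) structure enters.

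Finally I would run the same root count as in Corollary~\ref{prop:3}. Set $f(x)=x^m-p$ with $m$ odd and $m>n$. If $f$ had a root $\alpha\in\Phi$, then $\Q_p(\alpha)\subseteq\Phi$ with $[\Q_p(\alpha):\Q_p]=m$ by irreducibility (normality of $\Phi$ in fact places all $m$ roots in $\Phi$), forcing $m\leq n$, a contradiction. Hence $f$, viewed as a polynomial over $\Phi$, has odd degree and no root in $\Phi$, so $\Phi\notin\Gamma$, contradicting $\Phi\in\Gamma$ above. Therefore $F\notin\Lambda$. The main obstacle is concentrated entirely in the irreducibility step; once Eisenstein's criterion is seen to apply over $\Z_p$, the remainder is formally identical to the rational case.
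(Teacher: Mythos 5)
Your proposal is correct and is exactly what the paper intends: the paper's proof of this corollary consists of the single phrase ``The same argument yields,'' referring to the proof for finite extensions of $\Q$, and you have carried out precisely that transcription, supplying the one ingredient that needs rechecking, namely that $x^m-p$ is Eisenstein over the discrete valuation ring $\Z_p$ and hence irreducible over $\Q_p$ for every odd $m>n$. The remaining steps (normal closure $\Phi$, the hereditary property of $\Lambda$ under finite extensions, and the degree count forcing $m\leq n$) match the paper's argument for Corollary \ref{prop:3} verbatim.
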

\begin{cor}
\label{cor:a}
Every finite field $F$ is not of class $\Lambda$. 
\end{cor}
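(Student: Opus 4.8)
The plan is to invoke the inclusion $\Lambda\subset\Gamma$ supplied by Theorem \ref{thm:1}: to prove $F\notin\Lambda$ it suffices to prove $F\notin\Gamma$, i.e. to exhibit a single polynomial of odd degree over $F$ that has no root in $F$. So let $F$ be finite, write $q=\card F$, and recall that every element $a\in F$ satisfies $a^q=a$. Hence the polynomial $x^q-x$ vanishes at every point of $F$ (including $0$), and therefore so does $x^{\varepsilon}(x^q-x)$ for any exponent $\varepsilon\geq 0$.

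First I would set $f(x)=x^{\varepsilon}(x^q-x)+1$, whose leading term is $x^{q+\varepsilon}$ and whose value at each $a\in F$ is $f(a)=a^{\varepsilon}\cdot 0+1=1\neq 0$. Thus $f$ has no root in $F$ regardless of the choice of $\varepsilon$, and it only remains to pick $\varepsilon\in\{0,1\}$ so that $\deg f=q+\varepsilon$ is odd.

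The one point requiring care is this parity adjustment, which is governed by $\mathrm{char}(F)$. If $q$ is odd (equivalently $\mathrm{char}(F)\neq 2$), then $\varepsilon=0$ works and $f(x)=x^q-x+1$ has odd degree $q$. If $q$ is even (equivalently $\mathrm{char}(F)=2$), then $\varepsilon=1$ works and $f(x)=x^{q+1}-x^2+1$ has odd degree $q+1$. In either case $f$ is an odd-degree polynomial over $F$ with no root in $F$, whence $F\notin\Gamma$ and therefore $F\notin\Lambda$. A characteristic-free alternative avoids the case split altogether: the standard count of monic irreducibles shows that $F$ carries an irreducible polynomial of every degree, in particular of degree $3$, and any irreducible of degree $\geq 2$ has no root in $F$. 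I would nonetheless favor the explicit $f$ above as more self-contained, and I expect the minor parity bookkeeping to be the only obstacle in an otherwise immediate argument.
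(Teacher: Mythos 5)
Your proposal is correct and matches the paper's proof essentially verbatim: the paper also invokes $\Lambda\subset\Gamma$ and exhibits the rootless odd-degree polynomials $\alpha^q-\alpha+1$ for ${\rm char}(F)\neq 2$ and $\alpha^{q+1}-\alpha^2+1$ for ${\rm char}(F)=2$, which are exactly your $f$ with $\varepsilon=0$ and $\varepsilon=1$. Your parity bookkeeping and the root-freeness check are both sound, so there is nothing to fix.
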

\begin{proof}
If $q=$card$(F)$ then $\alpha^q-\alpha + 1=1\neq 0$ for all $\alpha\in F$. This 
polynomial is of odd degree if char$F\neq 2$. If char$F=2$ then such a
polynomial is $\alpha^{q+1}-\alpha^2+1$.
\end{proof}
\begin{cor}
\label{cor:aa}
For every field $K$ and every $n\geq 1$ the field
$F=K(t_1,\cdots,t_n)$ of rational functions of $n$ variables 
over $K$ is not of class $\Lambda$. 
\end{cor}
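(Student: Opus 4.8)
The plan is to reduce to the numerical condition via the inclusion $\Lambda\subset\Gamma$ established in Theorem \ref{thm:1}, exactly as in the preceding corollaries. Thus it suffices to show that $F=K(t_1,\dots,t_n)\notin\Gamma$, i.e. to exhibit a single polynomial of odd degree over $F$ that has no root in $F$. Once such a polynomial is produced, $F\notin\Gamma$ forces $F\notin\Lambda$ and the corollary follows. Note that $n\geq 1$ guarantees that at least the variable $t_1$ is available.

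My candidate is the cubic $f(X)=X^3-t_1\in F[X]$, which has the odd degree $3$ irrespective of char$(F)$. To prove it has no root, I would first write $F=K_0(t_1)$ with $K_0=K(t_2,\dots,t_n)$, so that $F$ is the field of rational functions in the single variable $t_1$ over the field $K_0$. Since $K_0[t_1]$ is a principal ideal domain in which $t_1$ is prime, it carries the $t_1$-adic valuation, and this extends to a discrete valuation $v$ on $F$ with value group $\Z$ normalized by $v(t_1)=1$. If $r\in F$ satisfied $r^3=t_1$, then applying $v$ would give $3\,v(r)=v(t_1)=1$, which is impossible in $\Z$. Hence $f$ has no root in $F$, so $F\notin\Gamma$ and therefore $F\notin\Lambda$.

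The only point demanding care, and the place I would regard as the main (minor) obstacle, is to make the argument genuinely uniform in char$(F)$: the polynomial $X^3-t_1$ must be checked to stay of odd degree and to remain rootless in every characteristic, including the troublesome cases char$(F)=2$ and char$(F)=3$. The valuation-theoretic formulation handles this cleanly, because the divisibility obstruction $3\nmid 1$ is insensitive to the characteristic and does not rely on separability or on $f$ being irreducible. An equivalent route, should one prefer to avoid valuations, is to argue directly in the unique factorization domain $K[t_1,\dots,t_n]$: writing a hypothetical root in lowest terms as $p/q$ and comparing the exponents of the prime $t_1$ in $p^3=t_1\,q^3$ yields the same contradiction modulo $3$.
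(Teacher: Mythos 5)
Your proposal is correct and follows the same route as the paper: the paper's entire proof is the observation that $\alpha^3-t_1\neq 0$ for all $\alpha\in F$, i.e.\ the same witness polynomial $X^3-t_1$ combined with the inclusion $\Lambda\subset\Gamma$. You merely make explicit (via the $t_1$-adic valuation, or equivalently unique factorization in $K[t_1,\dots,t_n]$) the standard fact that $t_1$ has no cube root in $F$, which the paper leaves tacit.
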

\begin{proof}
$\alpha^3-t_1\neq 0$ for all $\alpha\in F$.
\end{proof}
The class $\Gamma$ is closely related to the class $\Delta$ of fields 
such that the degrees of all their finite extensions are powers of 2. 
\begin{theorem}
\label{thm:2}
$\Gamma'\subset\Delta\subset\Gamma$. 
\end{theorem}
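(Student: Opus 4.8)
The plan is to prove the two inclusions $\Delta\subset\Gamma$ and $\Gamma'\subset\Delta$ separately. The first is a short factorization argument using the definition of $\Delta$; the second carries the real content and proceeds through Galois theory together with a Sylow-subgroup argument, after a preliminary reduction to separable extensions.

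For $\Delta\subset\Gamma$ I would argue as follows. Let $F\in\Delta$ and let $f$ be a polynomial of odd degree over $F$. Factoring $f$ into irreducibles, the degrees sum to an odd number, so some irreducible factor $g$ has odd degree. Then $F[x]/(g)$ is a finite extension of $F$ of degree $\deg g$, which by the definition of $\Delta$ must be a power of $2$; the only odd power of $2$ is $1$, so $g$ is linear and $f$ has a root in $F$. Hence $F\in\Gamma$. Note that no hypothesis on the characteristic is used here, which is consistent with the absence of a prime on $\Gamma$.

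For $\Gamma'\subset\Delta$ the first step is to show that a field $F\in\Gamma'$ is perfect, so that Galois theory becomes available on all finite extensions. This is automatic when $\mathrm{char}(F)=0$. If $\mathrm{char}(F)=p$ with $p$ an odd prime and $F$ were imperfect, there would exist $a\in F\setminus F^p$; then $x^p-a$ is irreducible of odd degree $p$, so by $F\in\Gamma$ it has a root $b\in F$, giving $a=b^p\in F^p$, a contradiction. Thus $F$ is perfect and every finite extension of $F$ is separable. This is precisely the point where the hypothesis $\mathrm{char}(F)\neq 2$ is indispensable, and it is the main obstacle of the proof: for $p=2$ the polynomial $x^2-a$ has even degree and the perfectness argument collapses.

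With separability secured, let $E/F$ be an arbitrary finite extension and embed it into a finite Galois extension $L/F$; set $G=\mathrm{Gal}(L/F)$. Since $[E:F]$ divides $[L:F]=|G|$, it suffices to prove that $|G|$ is a power of $2$. Suppose not, so that $|G|=2^a b$ with $b>1$ odd. Taking a Sylow $2$-subgroup $P\leq G$ and its fixed field $M=L^{P}$, the Galois correspondence gives $[M:F]=[G:P]=b$. Choosing any $\theta\in M\setminus F$, its minimal polynomial over $F$ is irreducible of degree dividing $b$, hence of odd degree $>1$; by $F\in\Gamma$ this polynomial would have a root in $F$, contradicting its irreducibility. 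Therefore $b=1$, so $|G|$ and hence $[E:F]$ are powers of $2$, and $F\in\Delta$. Once perfectness has been established, the remaining ingredients here, the Galois correspondence and Sylow's theorem, are entirely standard.
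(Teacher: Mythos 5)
Your proof is correct and follows essentially the same route as the paper: the inclusion $\Delta\subset\Gamma$ via an odd-degree irreducible factor is exactly the content of the paper's Lemma \ref{lem:0} (which you simply inline), and your $\Gamma'\subset\Delta$ argument --- perfectness of $F$ from $F\in\Gamma$ when ${\rm char}(F)=p$ odd, passage to a Galois closure, the fixed field of a Sylow $2$-subgroup, and the contradiction from an odd-degree element $\theta\in M\setminus F$ --- mirrors the paper's proof step for step, again with the final contradiction being the second half of Lemma \ref{lem:0} spelled out directly. The only cosmetic difference is that the paper tracks the odd parts $2k+1$ and $2l+1$ of $[\Phi:F]$ and $[\Psi:F]$ explicitly, while you conclude more simply that $[E:F]$ divides the $2$-power $|G|$; both are fine.
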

\begin{cor}
\label{cor:20}
If char$(F)\neq 2$ then $F\in\Gamma$ if and only if $F\in\Delta$.
\end{cor}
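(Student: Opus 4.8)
The plan is to prove the two inclusions separately. The inclusion $\Delta\subset\Gamma$ is the routine half. Given $F\in\Delta$ and a polynomial $f$ over $F$ of odd degree, I would factor $f$ into irreducibles; since the degrees of the irreducible factors sum to an odd number, at least one factor $g$ has odd degree $d$. The quotient $F[x]/(g)$ is then a field extension of $F$ of degree $d$, so by the defining property of $\Delta$ the number $d$ is a power of $2$; being also odd, it must equal $1$, i.e. $g$ is linear and $f$ has a root in $F$. This direction uses neither the characteristic hypothesis nor anything deep.

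The substantial direction is $\Gamma'\subset\Delta$, and the first step is to reduce to the case of a perfect base field. If char$(F)=0$ this is automatic; if char$(F)=p$ with $p$ an odd prime, then for any $a\in F$ the polynomial $x^p-a$ has odd degree $p$, hence a root in $F$ by $F\in\Gamma$, so $a\in F^p$. Thus $F=F^p$ and $F$ is perfect, so $F$ has no proper purely inseparable extension and every finite extension of $F$ is separable. This is exactly the point where the hypothesis char$(F)\neq 2$ is used: it is what places the relevant polynomial $x^p-a$ in odd degree, so that its root is forced by $\Gamma$.

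Next I would record the elementary consequence that $F$ admits no separable extension of odd degree greater than $1$: such an extension is simple by the primitive element theorem, $E=F(\theta)$, and the minimal polynomial of $\theta$ would be an irreducible polynomial of odd degree $>1$ over $F$, hence without a root in $F$, contradicting $F\in\Gamma$. Finally I would upgrade this to the full statement by a Galois-theoretic Sylow argument. Let $E/F$ be an arbitrary finite extension; it is separable, so let $N$ be its Galois closure and put $G=\mathrm{Gal}(N/F)$. If some odd prime $\ell$ divided $[E:F]$, then $\ell\mid |G|$, and taking a Sylow $2$-subgroup $P\leq G$, the index $[G:P]$ would be odd (it is the odd part of $|G|$) and divisible by $\ell$; hence the fixed field $N^{P}$ would be a separable extension of $F$ of odd degree $>1$, which is impossible by the previous step. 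Therefore $[E:F]$ has no odd prime factor, i.e. it is a power of $2$, and $F\in\Delta$.

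The main obstacle is precisely the passage from ``$F$ has no odd-degree extension'' to ``every finite extension of $F$ has $2$-power degree'': a field with no odd-degree extension could a priori still carry an extension of degree such as $6$, and eliminating these intermediate cases is exactly what the Galois closure together with Sylow's theorem accomplishes. The only other step requiring care is the separability reduction, which is where the assumption char$(F)\neq 2$ enters; without it one would have to analyse purely inseparable extensions by a separate argument.
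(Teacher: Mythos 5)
Your proposal is correct and follows essentially the same route as the paper: the easy half $\Delta\subset\Gamma$ via factoring an odd-degree polynomial (the content of the paper's Lemma~\ref{lem:0}), and the hard half $\Gamma'\subset\Delta$ by first deducing perfectness from roots of $x^p-a$ (where char$(F)\neq 2$ enters) and then passing to a Galois closure and taking the fixed field of a Sylow $2$-subgroup to produce an odd-degree extension, which $\Gamma$ forbids. The only cosmetic difference is that you invoke the primitive element theorem to rule out odd-degree extensions, whereas the paper's Lemma~\ref{lem:0} simply adjoins an arbitrary element $x\in\Phi\setminus F$, whose degree divides the odd degree $[\Phi:F]$ --- slightly more economical, but the argument is the same.
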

Thus, if char$(F)\neq 2$ then each of conditions $F\in\Gamma$ and
$F\in\Delta$ is necessary and sufficient for $F\in\Lambda$. 

\section{Proofs}
\label{sec:pro}

We start with the following 
\begin{lemma}
\label{lem:0}
A field $F$ is of class $\Gamma$ if and only if the degrees of all its nontrivial finite
extensions are even.
\end{lemma}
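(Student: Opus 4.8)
The plan is to reduce the condition $F\in\Gamma$ to a statement purely about irreducible polynomials, and then to translate between irreducible polynomials and finite extensions by the standard correspondence: minimal polynomials record the degree of a simple extension, and conversely $F[x]/(f)$ is a field extension of degree $\deg f$ whenever $f$ is irreducible.

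First I would record the reformulation that $F\in\Gamma$ holds if and only if there is no irreducible polynomial over $F$ of odd degree $>1$. To justify this, note that any polynomial of odd degree has at least one irreducible factor of odd degree, since the degrees of its factors sum to an odd number; and a root of a polynomial in $F$ corresponds exactly to a linear factor. Hence every odd-degree polynomial acquires a root precisely when every odd-degree irreducible polynomial is linear.

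For the forward direction I would assume $F\in\Gamma$ and take a nontrivial finite extension $E/F$. Choosing any $\alpha\in E\setminus F$, its minimal polynomial over $F$ is irreducible of degree $d=[F(\alpha):F]>1$, so the reformulation forces $d$ to be even. The tower law $[E:F]=[E:F(\alpha)]\,[F(\alpha):F]$ then shows $[E:F]$ is divisible by the even number $d$, hence even. For the converse I would argue by contraposition on the reformulation: if some irreducible $f$ over $F$ had odd degree $d>1$, then $F[x]/(f)$ would be a nontrivial finite extension of $F$ of odd degree $d$, contradicting the hypothesis that all such degrees are even.

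I do not expect a serious obstacle, since the whole argument is elementary field theory. The two points that require care are the passage from an odd-degree polynomial to one of its odd-degree irreducible factors, so that ``having a root'' is characterized correctly, and the use of the tower law to promote the even-degree property from the simple extensions $F(\alpha)$ to an arbitrary finite extension $E$.
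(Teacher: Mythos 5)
Your proof is correct and takes essentially the same route as the paper: both directions rest on the correspondence between odd-degree irreducible polynomials and odd-degree simple extensions (minimal polynomials one way, $F[x]/(f)$ the other), plus the fact that the degree of a simple subextension divides $[E:F]$, which is exactly your tower-law step. Your only difference is organizational — isolating the reformulation of $F\in\Gamma$ via irreducible polynomials and arguing the forward direction directly rather than by contradiction — and it does not change the substance.
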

\begin{proof}
Let $F\notin\Gamma$, so there exists a polynomial $f$ over $F$ of odd degree 
without roots in $F$. Then $f$ has an irreducible 
factor with the same properties. The corresponding extension of $F$ 
is of odd degree $d>1$, a contradiction. 

Now let $F\in\Gamma$, and let $\Phi\supset F$ be a finite extension of 
odd degree $d>1$. For $x\in\Phi\setminus F$ the degree $d_x$ of the
extension $F[x]$ is odd as a divisor of $d$. However, $d_x$ 
is the degree of the irreducible polynomial $f$ over $F$ such that $f(x)=0$. 
This is a contradiction since $f$ has a root in $F$.
\end{proof}
This Lemma immediately implies 
$\Delta\subset\Gamma$ that is a part of Theorem \ref{thm:2}. 
\begin{proof}[Proof of $\Lambda\subset\Gamma$.]
Let $F\notin\Gamma$. By Lemma \ref{lem:0} there exists an
extension $\Phi\supset F$ of odd degree $d>1$. 
Consider the projection $\pi:\Phi\rightarrow\Phi$ onto the   
one-dimensional subspace $F$. We introduce in $\Phi$ the new multiplication 
\begin{equation}
\label{eq:new}
x\circ y = (x-\pi x)(y-\pi y). 
\end{equation} 
Then $\Phi$ becomes a commutative $F$-algebra. The subspace $F$ is an ideal 
in $\Phi$ since $x\circ y =0$ for $x\in F$ and all
$y\in\Phi$. In the quotient algebra $F$-algebra $A=\Phi/F$ we consider 
the equation 
\begin{equation}
\label{eq:nev}
X\circ X = \lambda X
\end{equation} 
with $\lambda\in F$. To conclude that $F\notin\Lambda$ it suffices
to show that \eqref{eq:nev} has no solutions $X\neq 0$. Suppose to the
contrary. Then there exists $x\in \Phi\setminus F$ such that  
\[  
(x-\pi x)^2 -\lambda x -\mu =0
\]
with a $\mu\in F$. This is a quadratic equation over $F$ since $\pi x\in F$. 
Its second root is $x'=2\pi x + \lambda -x\equiv -x$ (mod$F$), so $x'\notin F$.
Hence, the extension $F[x]\supset F$ is of degree 2. On the 
other hand, $F[x]\subset\Phi$. Therefore, $d=2[\Phi:F[x]]$. Thus, $d$ is 
even, a contradiction.
\end{proof}

Now note that {\em every field $F\in\Gamma '$ is perfect}. Indeed, if char$(F)$    
is an odd prime $p$ then $F^p =F$ because of $F\in\Gamma$.   
Recall that if a field $F$ is perfect then all its normal algebraic extensions
are separable, so they are the Galois extensions that allows
us to refer to the Galois theory, see e.g. [La], Ch. 8.
\begin{proof}[Proof of the inclusion $\Gamma'\subset\Delta$]
Let $\Phi\supset F$ be a finite extension of $F$, and let
$\Psi\supset\Phi$ be a Galois extension of $F$.  If $[\Phi:F]=2^i(2k+1)$ 
then $[\Psi :F]=2^j(2l+1)$ where $j\geq i$ and $2l+1$ 
is divisible by $2k+1$. 
Consider the Galois group $G=$Gal$(\Psi/F)$
and its Sylow 2-subgroup $H$. Let $\Omega$ be the subfield of $\Psi$
consisting of the fixed points of $H$. The index of $H$ in $G$ is
$2l+1$, hence $[\Omega :F]=2l+1$. Since $F\in\Gamma$, we 
conclude that $l=0$ by Lemma \ref{lem:0}. A fortiori, $k=0$, hence $[\Phi:F]= 2^i$.  
\end{proof}
{\em Theorem \ref{thm:2} is already proven}. 
As to Theorem \ref{thm:1}, we have to prove the inclusion
$\Gamma'\subset\Lambda$. To this end (and also to prove Theorem
\ref{thm:acf}) we return to \eqref{eq:3} and consider it as a system 
of $n$ homogeneous equations of second degree   
\begin{equation}
\label{eq:4.1} 
g_j(x,\lambda)\equiv\sum_{i,k=1}^n\alpha_{ik,j}\xi_i\xi_k-\lambda\xi_j =0,
\eqsp 1\leq j\leq n, \eqsp x=(\xi_1,\cdots,\xi_n)\in F^n,  
\end{equation}
with $n+1$ unknowns $\xi_1,\cdots,\xi_n,\lambda$. With $\lambda\in F$ this
is a system of equations in the projective space $FP^n$. The solution
$(\bf{0}$:$1)\equiv(0:\cdots:0:1)$ 
is called {\em trivial }. In all nontrivial solutions $(x,\lambda)\in
FP^n$ the scalar component $\lambda$ is an eigenvalue of the quadratic
operator under consideration. Thus, our aim is to prove the existence
of a nontrivial solution in $FP^n$. 

Let us extend $FP^n$ to the projective space $\overline{F}P^{n}$ 
over the algebraic closure $\overline{F}$.
We call the system \eqref{eq:4.1} {\em generic over $F$} if the set of its
solutions in $\overline{F}P^{n}$ is finite. Then the 
number of solutions (i.e., the sum of its multiplicities) is equal to $2^n$ 
by the Bezout Theorem. A solution is called {\em simple} if its
multiplicity is equal to 1. 
\begin{lemma}
\label{lem:1} 
If the system \eqref{eq:4.1} is generic over $F$ then the trivial
solution is simple.
\end{lemma}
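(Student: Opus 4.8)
The plan is to move to the affine chart in which the trivial solution lives and to recognize simplicity there as the nonvanishing of a Jacobian, a condition that will hold automatically.

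First I would note that the trivial solution $(\mathbf 0:1)$ has nonzero $\lambda$-coordinate, hence lies in the affine chart $\lambda=1$ of $\overline{F}P^{n}$; in the affine coordinates $\xi_1,\dots,\xi_n$ of this chart it is the origin $x=0$, and the defining equations dehomogenize to $g_j(x,1)=\sum_{i,k=1}^n\alpha_{ik,j}\xi_i\xi_k-\xi_j$, $1\le j\le n$. Since the system is generic over $F$, its solution set in $\overline{F}P^{n}$ is finite, so the origin is an isolated common zero of the $g_j(\cdot,1)$ and its intersection multiplicity equals the finite number $\dim_{\overline F}\bigl(\mathcal O/(g_1(\cdot,1),\dots,g_n(\cdot,1))\bigr)$, where $\mathcal O$ is the local ring of $\overline{F}^{\,n}$ at the origin. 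This affine multiplicity agrees with the multiplicity of the point as a solution of the homogeneous system in $\overline{F}P^{n}$ counted by Bezout.

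Next I would invoke the standard local criterion: an isolated common zero $p$ of $n$ polynomials in $n$ variables is simple exactly when their differentials at $p$ are linearly independent, that is, when the Jacobian matrix $[\,\partial g_j(\cdot,1)/\partial\xi_i\,]$ is nonsingular at $p$. Indeed, multiplicity $1$ means the maximal ideal of $\mathcal O$ is already generated by the $g_j(\cdot,1)$, which occurs precisely when their linear parts span the cotangent space at the origin.

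The remaining step is immediate. In the chart $\lambda=1$ the quadratic part of each $g_j(\cdot,1)$ has vanishing differential at the origin, while its linear part is exactly $-\xi_j$; hence $\partial g_j(\cdot,1)/\partial\xi_i\big|_{x=0}=-\delta_{ij}$, so the Jacobian at the origin is $-I$ and its determinant $(-1)^n$ is nonzero. Thus the trivial solution is simple. The only point requiring care is the compatibility between the projective intersection multiplicity entering the Bezout count and its affine counterpart in a chart containing the point; once that is granted the argument is formal. In fact the nonsingular Jacobian already forces the origin to be an isolated zero, so genericity is needed only to make the global Bezout count meaningful. The real content of the lemma is simply that this distinguished chart is the one in which the linear parts of the equations are the coordinate functions $\xi_j$ themselves.
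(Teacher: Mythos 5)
Your proof is correct and is essentially the paper's own argument: both pass to the affine chart $\lambda=1$, observe that the dehomogenized polynomials $g_j(x,1)$ have linear part exactly $-\xi_j$ at the origin, and conclude multiplicity one because these linear parts span the cotangent space $M/M^2$, so the $g_j(\cdot,1)$ generate the maximal ideal of the local ring (your ``Jacobian is $-I$'' criterion is just a repackaging of the paper's Nakayama step, and like the paper's version it is valid in every characteristic, since the differential of the quadratic part vanishes at the origin regardless of char$(F)$).
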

\begin{proof}
The affine space $\overline{F}^n$ can be identified with  
the projective Zarisky neighborhood of $(\bf{0}$:$1)$ in $\overline{F}P^{n}$ consisiting of the points $(x:1)$,
$x\in\overline{F}^n$. The corresponding restrictions of the quadratic
forms $g_j(x,\lambda)$ are the polynomials $f_j(x)=g_j(x,1)$.  

Let $R$ be the local ring of the point $0\in\overline{F}^n$, and let $M$ be its 
maximal ideal. In the quotient $\overline{F}$- vector space $M/M^2$ the images 
of $(-\xi_j)\in M$, $1\leq j\leq n$, 
constitute a basis. The images of $f_j$ are the same 
since $f_j+\xi_j\in M^2$. By Nakayama's Lemma the system $(f_j)_{j=1}^n$ 
generates the ideal $M$. Hence, the intersection index of the corresponding divisors is 1.  
\end{proof}
Let us emphasize that Lemma \ref{lem:1} is true irrespective to char$(F)$.
\begin{proof}[Proof of Theorem \ref{thm:acf}]. 
If $F=\overline{F}$ then either the set of solutions in $FP^n$ is infinite
or the number of these solutions is $2^n>1$. By Lemma \ref{lem:1} there
exists a nontrivial solution of the system \eqref{eq:4.1} in $FP^n$. 
\end{proof}
{\em Later on} $F\in\Gamma'$. Let the system \eqref{eq:4.1} be 
generic over $F$. For every solution we fix the projective coordinates 
and take them from $F$ whensoever the solution belongs to $FP^n$. Now we     
consider the Galois extension $\Psi\supset F$ containing all these 
quantities. By Theorem \ref{thm:2} we have $[\Psi:F]= 2^i$ with an $i\geq 1$. Accordingly, the group
$G$=Gal$(\Psi/F)$ is  of order $2^i$.  
Since the coefficients of all $g_j(x,\lambda)$ belong to $F$, the
group $G$ naturally acts on the set of solutions preserving their multiplicities.  
A solution is $G$-invariant if and only if it is from $FP^n$.

Let $z=(x,\lambda)\neq 0$ be a solution of a multiplicity $m$, and let
$Z$ be its $G$-orbit. All points from $Z$ are solutions of the same 
multiplicity $m$. The contribution of $Z$ into the number
of solutions equals $\hat{m}=$card${(Z)}m$. In this product the second factor
coincides with the index of the stabilizer of $z$. 
This index is $2^j$ with some $j\geq 1$ if $z$ is not a fixed point, i.e. if
this solution is not from $FP^n$. Hence, the
number of such solutions is even. Since the number of all
solutions is $2^n$, the 
number of those solutions which are from $FP^n$ is even.
By Lemma \ref{lem:1} there exists a nontrivial solution in $FP^n$.  
{\em The generic case in Theorem \ref{thm:1} is settled}. 

To complete the proof of Theorem \ref{thm:1} 
we show that an ``infinitesimal'' perturbation of the system
\eqref{eq:4.1} is generic over an extension of $F$. 
\begin{lemma} 
\label{lem:3}
Let a field $K\supset F$ is endowed by a non-Archimedean valuation, 
let $A$ be the valuation ring, and let $A^0$ be its maximal
ideal. Then for $1\leq j\leq n$ there are some linear forms $\varphi_j(x)$ with
coefficients from $A$ and some
$\varepsilon_j\in A^0$ such that the system 
\begin{equation}
\label{eq:4.2}
g_j(x,\lambda)-\varepsilon_j(\varphi_j(x))^2 = 0, \eqsp 1\leq j\leq n,
\end{equation}
is generic over $K$. 
\end{lemma}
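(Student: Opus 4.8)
The plan is to exhibit the perturbed systems as the fibres of a projection from an incidence variety, so that a dimension count forces the general perturbation to be generic over $K$, and then to descend to a point satisfying the valuation constraints by using the freedom hidden in the non‑Archimedean valuation. Throughout I work over $\overline{K}$ and treat the coefficients of the linear forms together with the scalars as variables: writing $\varphi_j(x)=\sum_{i=1}^n c_{ij}\xi_i$, the parameter space is $\mathcal{P}=\overline{K}^{\,n^2+n}$ with coordinates $(c_{ij},\varepsilon_j)$. Inside $\overline{K}P^n\times\mathcal{P}$ I form the incidence variety
\[
W=\{(x,\lambda;c,\varepsilon)\ :\ g_j(x,\lambda)-\varepsilon_j\varphi_j(x)^2=0,\ 1\le j\le n\},
\]
with its two projections $p:W\to\overline{K}P^n$ and $\pi:W\to\mathcal{P}$. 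A parameter over which the $\pi$‑fibre is finite is precisely a perturbation that is generic over $K$, so the goal is to show that the general fibre of $\pi$ is finite.

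The heart of the argument is the dimension count for $p$. Fix a point $(x,\lambda)\in\overline{K}P^n$. Its $j$‑th defining equation $\varepsilon_j\varphi_j(x)^2=g_j(x,\lambda)$ involves only the block $(c_{1j},\dots,c_{nj},\varepsilon_j)$, and it is a nonzero polynomial in that block: since $x\neq0$ one can pick coefficients with $\varphi_j(x)\neq0$ and take $\varepsilon_j=1$, so the left‑hand side is not identically zero. Hence each equation cuts out a hypersurface in its own $(n+1)$‑dimensional block, and as the $n$ blocks are disjoint the fibre $p^{-1}(x,\lambda)$ has dimension exactly $n\cdot n=n^2$ for \emph{every} point of $\overline{K}P^n$. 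Therefore $\dim W=n+n^2=\dim\mathcal{P}$. Moreover $\pi$ is surjective: for any parameter value the perturbed system is a family of $n$ quadric hypersurfaces in the $n$‑dimensional $\overline{K}P^n$, whose intersection is nonempty by the projective dimension theorem. Consequently the general fibre of $\pi$ has dimension $\dim W-\dim\mathcal{P}=0$, so there is a dense Zariski‑open set $U\subset\mathcal{P}$, defined over $K$, on which the perturbed system is generic over $K$.

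It remains to find a point of $U$ obeying the valuation constraints, and this is where the hypothesis enters. The valuation being nontrivial, $A^0$ contains some $t\neq0$; then $t,t^2,\dots$ are distinct, so $A$ is infinite. Since $U$ is dense open, for a general choice of the $c_{ij}$ the slice $U_{c}=\{\varepsilon:(c,\varepsilon)\in U\}$ is dense open in $\overline{K}^n$, and by infiniteness of $A$ I may fix one such $c^{0}$ with all entries in $A$. The complement of $U_{c^{0}}$ is the zero set of a single nonzero polynomial $P\in K[\varepsilon_1,\dots,\varepsilon_n]$. Substituting $\varepsilon_j=t u_j$ yields $P(tu)=\sum_\alpha c_\alpha\, t^{|\alpha|}u^\alpha$, again a nonzero polynomial in $u$, so by infiniteness of $A$ there is $u\in A^n$ with $P(tu)\neq0$. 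Then $\varepsilon_j=t u_j\in A^0$ because $A^0$ is an ideal, the forms $\varphi_j$ have coefficients $c^0_{ij}\in A$, and $(c^0,\varepsilon)\in U$; hence the resulting system \eqref{eq:4.2} is generic over $K$, as required.

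The step I expect to be the main obstacle is the dimension count certifying that $\pi$ has source and target of equal dimension with finite general fibre: one must verify that the very special, $\lambda$‑free rank‑one squares $\varepsilon_j\varphi_j(x)^2$ already suffice to push the original system into the generic locus, i.e. that \emph{no} point of $\overline{K}P^n$ contributes an excess‑dimensional fibre to $p$ and that $\pi$ is genuinely surjective. Once $\dim W=\dim\mathcal{P}$ and surjectivity are in hand, the descent to a $K$‑point with $\varepsilon_j\in A^0$ is a soft matter, relying only on the nontriviality of the valuation and the resulting infiniteness of $A$.
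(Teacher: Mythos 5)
Your strategy---an incidence variety $W\subset\overline{K}P^n\times\mathcal{P}$ with the two projections, a dimension count giving $\dim W=\dim\mathcal{P}$, and a final hypersurface-avoidance step to force the parameters into $A$ and $A^0$---is viable and genuinely different from the paper's, but the step you yourself flag as the heart of the argument contains a concrete error. The claim that $p^{-1}(x,\lambda)$ has dimension exactly $n^2$ for \emph{every} point of $\overline{K}P^n$ fails at the trivial point $(\mathbf{0}:1)$: there $g_j(0,1)=0$ and $\varphi_j(0)=0$ identically, so each equation $\varepsilon_j\varphi_j(x)^2=g_j(x,\lambda)$ reads $0=0$ and the fibre is \emph{all} of $\mathcal{P}$, of dimension $n^2+n$. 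Your justification (``since $x\neq0$ one can pick coefficients with $\varphi_j(x)\neq0$'') silently excludes exactly this point, so the deduction ``constant fibre dimension $n^2$ over an $n$-dimensional base, hence $\dim W=n+n^2$'' does not stand as written; and since $W$ is reducible, you in any case need the generic-fibre theorem component by component rather than the single identity ``general fibre $=\dim W-\dim\mathcal{P}$''. The repair is short: the excess fibre sits over one point and yields exactly one component $Z_0=\{(\mathbf{0}:1)\}\times\mathcal{P}$, of dimension precisely $n^2+n$, whose $\pi$-fibres consist of the ever-present trivial solution; any other component $Z$ has $p(Z)\not\subset\{(\mathbf{0}:1)\}$, so its generic $p$-fibre lies over points with $x\neq0$ and your block computation gives $\dim Z\le n+n^2$; hence each component either fails to dominate $\mathcal{P}$ (and a general parameter avoids its image) or dominates with generic fibre of dimension $\le0$. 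With that patch the dense open $U$ exists, and your descent---choosing $c^0\in A^{n^2}$ off a hypersurface, then $\varepsilon=tu$ with $t\in A^0\setminus\{0\}$ and $u\in A^n$, using the infiniteness of $A$ and that $A^0$ is an ideal---is correct. (Note that both you and the paper tacitly assume the valuation is nontrivial, i.e.\ $A^0\neq0$; with the trivial valuation $\varepsilon_j=0$ is forced and the lemma is false.)

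Granting the patch, your route differs substantively from the paper's. The paper argues inductively: it cuts the solution variety $V_m$ by one perturbed form at a time, choosing a point on each irreducible component of $V_m$, a linear form $\varphi_{m+1}$ with coefficients in $A$ nonvanishing at those points (possible since $K$ is infinite), and $\varepsilon_{m+1}\in A^0$ avoiding the finitely many ratios $g_{m+1}(x_i,\lambda_i)/(\varphi_{m+1}(x_i))^2$, so the dimension drops at each step and $\dim V_n=0$. That construction meets the constraints $\varphi_j$ over $A$, $\varepsilon_j\in A^0$ on the fly and needs nothing beyond the fact that a form not vanishing on a component cuts its dimension; your argument proves the stronger statement that a \emph{generic} perturbation works and funnels all the valuation constraints into one polynomial-avoidance step, the scaling $\varepsilon=tu$ being a nice device. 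It is worth noting that the trivial point is a pressure point for the paper's proof as well: to get $\varphi_{m+1}(x_i)\neq0$ one must pick $x_i\neq0$ on each component, which is possible because for $m<n$ every component of $V_m$ has dimension $\ge n-m\ge1$ by the projective dimension theorem and so is not reduced to $(\mathbf{0}:1)$---a detail the paper also leaves tacit, and which your write-up, once corrected, makes explicit.
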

\begin{proof}
Inductively on $m$, $1\leq m\leq n$, we construct the variety 
\[
V_m=\{(x,\lambda)\in\overline{K}P^n:
g_j(x,\lambda)-\varepsilon_j(\varphi_j(x))^2 = 0, 1\leq j\leq m\} 
\]
of dimension $\leq n-m$. Then $\dim V_n=0$, thus 
the system \eqref{eq:4.2} is generic over $K$. 

Since $g_1\neq 0$, we can take 
$\varepsilon_1=0$ to get $\dim V_1=n-1$ with any $\varphi_1$. 
Now let $1\leq m<n$, and let $\dim V_m\leq n-m$.
Consider the decomposition $V_m=\cup_{1\leq i\leq r}X_i$ into irreducible
components, and choose any point $(x_i,\lambda_i)\in X_i$, $1\leq i\leq r$.
Let $g_{m+1}(x_i,\lambda_i) = 0$ for $1\leq i\leq s$, while 
$g_{m+1}(x_i,\lambda_i)\neq 0$ for $s+1\leq i\leq r$. Since the field $K$
is infinite, there exists a linear form $\varphi_{m+1}(x)$ on $\overline{K}^n$ 
with coefficients from $A$ such that $\varphi_{m+1}(x_i)\neq 0$, $1\leq i\leq r$. 
With $\varepsilon_{m+1}\in A^0$ different from $0$ and from all fractions 
\[
\frac{g_{m+1}(x_i,\lambda_i)}{(\varphi_{m+1}(x_i))^2},\eqsp s+1\leq i\leq r,
\]
the form $g_{m+1}(x,\lambda)-\varepsilon_{m+1}(\varphi_{m+1}(x))^2$
does not vanish on each component of $V_m$. Therefore, $\dim V_{m+1}<\dim V_m$, hence $\dim V_{m+1}\leq n-m-1$.
\end{proof}
For our purposes we need to get $K\in\Gamma'$ in Lemma \ref{lem:3}.
The first step in this direction is 
the extension of $F$ to the field $L=F((t))$ whose
nonzero elements are the formal Laurent series 
\[
a(t)=t^{\nu} \sum_{k=0}^{\infty}\alpha_kt^k
\]
where $\alpha_k\in F$, $\alpha_0\neq 0$, $\nu=\nu(a)\in\Z$. On $L$ we have
the standard non-Archimedean valuation $v_0(a)= \exp(-\nu(a))$,
$a\neq 0$. The ground field $F$ is embedded in $L$ as the field of 
constants, $v_0(\alpha)= 1$ for $\alpha\in F\setminus\{0\}$.  

With the distance $v_0(a-b)$ the set $L$ is a complete metric space. 
The closed unit ball $A_L=\{a\in L:v_0(a)\leq 1\}$, i.e. the set of
regular series, is just the valuation ring of the field $L$. Its unique
maximal ideal is the open ball $A_{L}^0=\{a:v_0(a)< 1\}$. 
The residue field $R_L=A_{L}/A_{L}^0$ is isomorphic to $F$. 

The valuation $v_0$ can be extended to a non-Archimedean valuation $v$ of the
algebraic closure $\overline{L}$. The corresponding ring is 
$A_{\overline{L}}=\{x\in \overline{L}:v(x)\leq 1\}$, its   
maximal ideal is $A_{\overline{L}}^0=\{x\in \overline{L}:v(x)< 1\}$.   
Denote by $\rho$ the natural epimorphism from 
$A_{\overline{L}}$ to the residue field $R_{\overline{L}}=A_{\overline{L}}/A_{\overline{L}}^0$.

Now let $\bf{F}$ be the family of  
fields $E$ such that $L\subset E\subset\overline{L}$, and let
$\bf{F}$ be partially ordered by inclusion. On every $E\in\bf{F}$ we have 
the valuation $v|E$. The corresponding residue field is 
isomorphic to $R_{E}=\im(\rho|A_{E})$ where $A_{E}= A_{\overline{L}}\cap E$ 
is the valuation ring of the field $E$.   

Obviosly, if $E_1, E_2\in\bf{F}$ and $E_1\subset E_2$ then
$A_{E_1}\subset A_{E_2}$ and $R_{E_1}\subset R_{E_2}$.
In particular, $R_{L}\subset R_{E}\subset R_{\overline{L}}$ for  all $E\in\bf{F}$. 
By Zorn's Lemma there exists a field $K\in\bf{F}$ 
which is maximal among $E\in\bf{F}$ with $R_{E} = R_{L}$. 
Indeed, let $\{E_i\}$ be a linearly ordered subfamily of
$\bf{F}$, and let all $ R _{E_i}=R_L$. Then $\cup E_i$ is a field $E\in\bf{F}$, and 
$A_E= A_{\overline{L}}\cap(\cup E_i) = \cup(A_{\overline{L}}\cap E_i)=
\cup A_{E_i}$, hence $R_E=\cup R_{E_i}=R_L$. 
\begin{lemma} 
\label{lem:2} $K\in\Gamma'$.   
\end{lemma}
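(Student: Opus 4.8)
The plan is to verify the two defining requirements of $\Gamma'$ in turn. Since $F\subseteq L\subseteq K$, the characteristic of $K$ equals that of $F$, so $\mathrm{char}(K)\neq 2$ comes for free and the whole task reduces to showing $K\in\Gamma$. By Lemma~\ref{lem:0} it suffices to show that $K$ has no nontrivial finite extension of odd degree. So I would assume, for contradiction, a finite extension $\Phi\supset K$ with $d=[\Phi:K]$ odd and $d>1$. As $\overline{L}$ is an algebraic closure of $K$, I may realize $\Phi$ as an intermediate field $K\subseteq\Phi\subseteq\overline{L}$, so that $\Phi\in\mathbf{F}$.

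First I would use the maximality of $K$ to force the residue field to grow. Since $\Phi\supsetneq K$ lies in $\mathbf{F}$, the maximality of $K$ among members of $\mathbf{F}$ with residue field $R_K=R_L\cong F$ forbids $R_\Phi=F$; hence $R_\Phi\supsetneq F$. This residue extension is finite (its degree is at most $d$, by the fundamental inequality $ef\leq d$) and nontrivial, so Lemma~\ref{lem:0}, applied to the field $F\in\Gamma$, shows that $f:=[R_\Phi:F]$ is \emph{even}.

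Next I would record that $K$ is Henselian for $v|_K$. Indeed $L=F((t))$ is complete, hence Henselian, so $v_0$ admits a unique extension to $\overline{L}$, namely $v$. Any extension of $v|_K$ to $\overline{L}$ restricts on $L\subseteq K$ to $v_0$, and must therefore coincide with $v$; thus $v|_K$ has a unique extension, which is precisely the Henselian property. In particular $v|_K$ extends uniquely to $\Phi$, with residue field $R_\Phi$ and residue degree $f$.

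The last step, which I expect to be the main obstacle, is to turn the even residue degree into a contradiction with the oddness of $d$. Here I would invoke the fundamental equality of valuation theory for the Henselian field $K$: for the unique extension one has $d=[\Phi:K]=e\,f\,\delta$, where $e$ is the ramification index, $f$ the residue degree, and $\delta$ the defect, a power of the residue characteristic. Consequently $f\mid d$; but $f$ is even while $d$ is odd, which is impossible. This contradiction shows that $K$ has no nontrivial odd-degree extension, so $K\in\Gamma$ by Lemma~\ref{lem:0}, and hence $K\in\Gamma'$. The delicate ingredient is exactly this divisibility $f\mid d$: it is where the Henselian property of $K$ is genuinely used, and it is what guarantees that the even residue degree cannot be absorbed by ramification or by a possible wild defect.
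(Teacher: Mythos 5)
Your proof is correct, and its skeleton matches the paper's: reduce to $K\in\Gamma$ via Lemma~\ref{lem:0}, realize a hypothetical odd-degree extension inside $\overline{L}$ so that it belongs to $\mathbf{F}$, use the maximality of $K$ to force the residue field to grow beyond $R_L\cong F$, and obtain a parity contradiction by applying Lemma~\ref{lem:0} to $F\in\Gamma$ on the residue side. Where you genuinely diverge is in how you secure the divisibility of the residue degree in the extension degree. The paper never argues on $K$ itself: it picks a witness $x\in E\setminus K$ with $\rho x\notin R_L$, descends to the subfield $L_f\subset K$ generated over $L=F((t))$ by the coefficients of the minimal polynomial of $x$ --- a \emph{complete, discretely valued} field with residue field isomorphic to $F$ --- and applies the classical complete-discrete fact $ef=n$ (Proposition 18 of [La], Ch.~12) to the odd-degree extension $L_f[x]\supset L_f$, getting a residue degree $\delta>1$ that is odd, against Lemma~\ref{lem:0}. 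You instead note that $K$ itself is Henselian (the valuation $v|K$ extends uniquely, since $L$ is complete and $K\subset\overline{L}$ is algebraic over it --- this step of yours is correct) and invoke Ostrowski's defect theorem $d=ef\delta$ with $\delta$ a power of the residue characteristic exponent, giving $f\mid d$ directly for $\Phi/K$. Your route is shorter and requires no separability or perfectness care on the residue side, since the defect absorbs any inseparability --- which is presumably why the paper takes the trouble to record that $R_{L_f}\cong F$ is perfect before citing Lang; its cost is the appeal to the defect theorem for Henselian valued fields, a distinctly heavier tool than anything the paper uses, whereas the paper's descent to $L_f$ keeps the whole argument within elementary complete discrete valuation theory at the price of the auxiliary construction.
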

\begin{proof}  
Since char$(K)=$char$(F)\neq 2$, we actually have to prove that
$K\in\Gamma$. 
Suppose to the contrary. Then by Lemma \ref{lem:0} there exists 
a finite extension $E\supset K$
of odd degree $d>1$. One can take $E\subset\overline{L}$ since
$K\subset \overline{L}$. We have $E\in\bf{F}$, but $R_E\neq R_L$ 
by maximality of $K$ in $\bf{F}$. Hence, there exists $x\in E\setminus K$ such that 
$\rho x\notin R_L$. 
Let $f$ be an irreducible polynomial over $K$ such that $f(x)=0$.
Its  degree $d_f$ is odd since $d_f$ divides $d$.

Let us extend the initial field $L$ to a field $L_f\subset K$ by joining of all 
coefficients of $f$. Obviosly, $L_f\in\bf{F}$. The valuation
$v|L_f$ is discrete and $L_f$ is complete. Furthermore, 
$R_{L_f} = R_L$ since  $R_L\subset R_{L_f}\subset R_K=R_L$. Hence, 
$R_{L_f}$ is isomorphic to $F$, thus it is a perfect field of class $\Gamma$.  

Now we consider the field $L_f[x]\in\bf{F}$. The degree 
$[L_f[x]:L_f]$ coincides with $d_f$ since the polynomial 
$f$ is determined and irreducible over $L_f$. Hence, this degree is odd.
The residue field of $L_f[x]$ is a finite extension of $R_{L_f}$. Its degree $\delta$ divides 
$[L_f[x]:L_f]$ by Proposition 18 from [La], Ch.12. Hence, $\delta$
is odd. On the other hand, $\delta >1$ by choice of $x$. By Lemma
\ref{lem:0} $\delta$ is even, a contradiction.
\end{proof}
We also need the following lemma.
\begin{lemma} 
\label{lem:'2}
The ring $A_K$ is the direct sum of the subrings $F$ and $A_K^0$.     
\end{lemma}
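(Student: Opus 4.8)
The statement asserts the internal direct-sum decomposition $A_K = F \oplus A_K^0$ of additive groups (equivalently, of $F$-modules), in which $F$ is the subfield of constants sitting inside $L\subset K$ and $A_K^0$ is the maximal ideal of the valuation ring $A_K$. The plan is to verify the two characteristic properties of such a decomposition separately: that $F$ and $A_K^0$ intersect trivially, and that together they span $A_K$.

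First I would dispose of the intersection. Every nonzero constant $\alpha\in F$ satisfies $v(\alpha)=v_0(\alpha)=1$, since as a Laurent series it has $\nu(\alpha)=0$; in particular $\alpha$ is a unit of $A_K$, so it cannot satisfy $v(\alpha)<1$. Hence $F\cap A_K^0=\{0\}$, and any representation $x=\alpha+y$ with $\alpha\in F$, $y\in A_K^0$ is automatically unique.

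The substance is the surjectivity $A_K=F+A_K^0$, and the key observation is that the restriction $\rho|_F$ of the residue epimorphism $\rho\colon A_{\overline{L}}\to R_{\overline{L}}$ to the constants is an isomorphism onto $R_K$. Injectivity is immediate because $\rho|_F$ is a nonzero homomorphism out of a field. For surjectivity I would note that every regular series $\sum_{k\geq 0}\alpha_k t^k$ has the same residue as its constant term $\alpha_0\in F$, so $\rho(F)=R_L$; and by the very construction of $K$ we have $R_K=R_L$, whence $\rho(F)=R_K$. With this in hand the decomposition follows at once: given $x\in A_K$, choose the unique $\alpha\in F$ with $\rho(\alpha)=\rho(x)$; then $x-\alpha$ lies in $\ker(\rho|_{A_K})=A_K^0$, so $x=\alpha+(x-\alpha)\in F+A_K^0$.

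I do not expect a genuine obstacle here: the argument is a formal consequence of the way $K$ was singled out, namely as a field whose residue field is still $R_L\cong F$. The single place where that construction is actually used is the identity $R_K=R_L$ needed for surjectivity of $\rho|_F$; everything else is routine valuation-theoretic bookkeeping, and the proof goes through regardless of the characteristic.
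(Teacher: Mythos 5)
Your proof is correct and follows essentially the same route as the paper's: both arguments reduce everything to the identity $R_K=R_L$ coming from the maximality of $K$, together with $\ker(\rho|_{A_K})=A_K^0$; the paper lifts $\rho x$ through an element $a\in A_L$ and then splits $a=\alpha+\omega$ with $\alpha\in F$, $\omega\in A_L^0$, while you merge these two steps by observing directly that $\rho|_F$ is an isomorphism of $F$ onto $R_L=R_K$. The packaging differs slightly but the mathematical content is identical.
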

\begin{proof} 
First, note that $F\subset A_K$ since $v(x)=1$ for $x\in F$. 
For the same reason  $F\cap A_K^0 = 0$. Now let $x\in A_K$. Then 
$\rho x\in R_K = R_L$. Hence, $\rho x= \rho a$ where $a\in L$, whence $x-a\in \ker(\rho|A_K)= A_K^0$. In turn,
$a=\alpha +\omega$ where $\alpha\in F$ and $\omega\in A_L^0\subset A_K^0$.    
As a result, $x=\alpha + (x-a)+\omega\in F+A_K^0$.   
\end{proof}
Since Theorem \ref{thm:2} is true in the generic case,  the system \eqref{eq:4.2}
has a nontrivial solution $(x,\lambda)\in KP^n$ by 
Lemmas \ref{lem:3} and \ref{lem:2}.
Let $x=(\xi_i)_{i=1}^n\in K^n\setminus\{0\}$, and let
$\norm{x}=\max_iv(\xi_i)$. Show that $v(\lambda)\leq\norm{x}$. Indeed,
in the opposite case the division on $\lambda^2$ in \eqref{eq:4.2} yields
\begin{equation}
\label{eq:4.3}
\zeta_j=h_j(z), \eqsp 1\leq j\leq n,
\end{equation}
where $h_j$ are some quadratic forms with coefficients from $A_{K}$
and $z=(\zeta_j)_{i=1}^n = (\xi_j/\lambda)_{i=1}^n\in A_K^0\setminus\{0\}$. 
However, from \eqref{eq:4.3} it follows that $\norm{z}\leq\norm{z}^2$,
hence $\norm{z}\geq 1$, a contradiction. 

Thus, $\max(\norm{x},v(\lambda))=\norm{x}$. Let $\norm{x}=v(\xi_1)$ for definiteness. 
By division on $\xi_1$ we get a solution $(x_1,\lambda_1)\in KP^n$
with $\norm{x_1}=1$ and $v(\lambda_1)\leq 1$. By Lemma \ref{lem:'2}
$(x_1,\lambda_1)= (\hat{x},\hat{\lambda})+(y,\omega)$ where the first
summand belongs to $F^{n+1}$ and the second one belongs to $(A_K^0)^{n+1}$.
In addition, $\hat{x}\neq 0$, otherwise $x_1=y$, so $\norm{x_1}<1$.
Since $A_K^0$ is an ideal in $A_K$, the system \eqref{eq:4.2} yields 
$g_j(\hat{x},\hat{\lambda})\in F\cap A_K^0$, $1\leq j\leq n$. This implies 
$g_j(\hat{x},\hat{\lambda})=0$, $1\leq j\leq n$, by Lemma \ref{lem:'2}
again. 
{\em Theorem \ref{thm:2} is proven completely}.

\section{Some remarks}
\label{sec:rem}

\begin{remark}
\label{rem:1}
Over any field any power-associative  
finite-dimensional algebra has an idempotent or a 
nilpotent of some degree, see [S], Proposition 3.3. In the latter case if
$x^r=0$, $r\geq 2$, but $x^{r-1}\neq 0$, then $x^s$ is an absolute
nilpotent for $s = r - [r/2]$. Thus, if we restrict the definition of
class $\Lambda$ to the power-associative
algebras then $\Lambda$ extends to the class of all fields, so our problem disappears.
The same happens trivially underthe the restriction to the unital algebras. 
\end{remark}
\begin{remark}
\label{rem:2}
The infinite-dimensional version of $\Lambda$ is
empty.  Indeed, over any field $F$ there are no idempotents neither absolute nilpotents
in the algebra of polynomials $f(t)$ such that $f(0) = 0$.
\end{remark}
\begin{remark}
\label{rem:3}
The algebra $A$ from the proof of the inclusion 
$\Lambda\subset\Gamma$ is commutative. Therefore, this inclusion 
remains valid if in the definition of $\Lambda$ we consider 
the commutative algebras only.
\end{remark}
\begin{remark}
\label{rem:4}
In the case char$(F)=0$ the Corollary \ref{thm:acf} follows from $\C\in\Lambda$
by the metamathematical Lefshets Principle.
\end{remark}


\section*{References}
\label{sec:references}

[La] S.Lang, Algebra. Addison-Wessley Publ.Co., 1965.

[Ly] Yu.I.Lyubich, The nilpotency of the Engel commutative algebras 
of dimension $n\leq 4$ over $\R$. Uspechi Mat.Nauk, {\bf 32} , no.1
(1977), 195-196 (In Russian). 

[S] R.D.Schafer, An Introduction to Nonassociative
Algebras. Acad. Press, New York -London, 1966.

\end{document}